\newtheorem{theorem}{Theorem}
\newtheorem{lemma}[theorem]{Lemma}
\definecolor{note_fontcolor}{rgb}{0.800781, 0.800781, 0.800781}
\providecommand{\tabularnewline}{\\}
\providecommand{\algorithmname}{Algorithm}
\let\originalleft\left
\let\originalright\right
\renewcommand{\left}{\mathopen{}\mathclose\bgroup\originalleft}
\renewcommand{\right}{\aftergroup\egroup\originalright}
\theoremstyle{definition}
\newcommand{\norm}[1]{\|#1\|}
\newcommand{\bpm}{\begin{pmatrix}}
\newcommand{\epm}{\end{pmatrix}}
\newcommand{\vphi}{\varphi}
\newcommand{\diag}{\operatorname{diag}}
\begin{document}

\numberwithin{equation}{section}
\title{Solvability of the Power Flow Problem \\in DC Overhead Wire Circuit
Modeling}

\author{\|Jakub |\v{S}ev\v{c}\'ik|,
        \|Luk\'a\v{s} |Adam|,
        \|Jan |P\v{r}ikryl|,
        \|V\'aclav |\v{S}m\'idl|,}

\rec {September 30, 2020}


\abstract 
Proper traffic simulation of electric vehicles, which draw energy from overhead wires, requires adequate modeling of traction infrastructure. Such vehicles include trains, trams or trolleybuses. Since the requested power demands depend on a traffic situation, the overhead wire DC electrical circuit is associated with a non-linear power flow problem. Although the Newton-Raphson method is well-known and widely accepted for seeking its solution, the existence of such a solution is not guaranteed. Particularly in situations where the vehicle power demands are too high (during acceleration), the solution of the studied problem may not exist. To deal with such cases, we introduce a numerical method which seeks maximal suppliable power demands for which the solution exists. This corresponds to introducing a scaling parameter to reduce the demanded power. The interpretation of the scaling parameter is the amount of energy which is absent in the system, and which needs to be provided by external sources such as on-board batteries. We propose an efficient two-stage algorithm to find the optimal scaling parameter and the resulting potentials in the overhead wire network. We perform a comparison with a naive approach and present a real-world simulation of part of the Pilsen city in the Czech Republic. These simulations are performed in the traffic micro-simulator SUMO, a~popular open-source traffic simulation platform.
\endabstract

\keywords
   power flow problem; Newton-Raphson method; solvability; scaling parameter
\endkeywords

\subjclass
49Mxx, 
65J15, 
94C60,
90C30
\endsubjclass

\thanks
   This work was supported by the Ministry of Education, Youth and Sports of the Czech Republic under the project OP VVV Electrical Engineering Technologies with High-Level of Embedded Intelligence, CZ.02.1.01/0.0/0.0/18\_069/0009855, project OP VVV Research Center for Informatics, CZ.02.1.01/0.0./0.0./16\_019/0000765, and by UWB Student Grant Project no.~SGS-2018-009.
\endthanks

\section{Introduction \label{sec:Introduction}}

Electrification of transport belongs to one of the key targets of
the automotive industry today. The electrification of public transport
road vehicles in urban areas is feasible and well-used for decades
employing trolleybuses or recently hybrid trolleybuses (i.e. dynamically
charging e-buses with a battery pack on the board). A replacement
of classic buses (with a combustion engine) with (hybrid) trolleybuses
is, nonetheless, hardly possible without an appropriate adjustment
and dimensioning of the necessary traction infrastructure. For this
purpose, a simultaneous simulation of the power network and traffic
conditions needs to be used to identify weaknesses of the proposed solution
\cite{sevcik2019sumo}.

The trolleybus overhead wire network is typically a direct current
(DC) electric circuit, where traction substations supply electric
energy. The connected trolleybuses represent power loads. The steady
state analysis of such circuits enables monitoring of voltage drops,
undesirable over-currents, power losses and therefore effective dimensioning
of suggested overhead wire networks in urban areas. 
In this manuscript, we consider the well-known DC power flow (PF) problem, where electric traction substations are modeled as constant voltage
sources, the resistance of overhead wires is replaced by ideal resistor elements with the resistance linearly dependent on the distance between nodes; and trolleybuses are substituted by a source with a defined power load (Figure \ref{fig:dc_network}). 


\begin{figure}
\centering
\fontsize{9pt}{12pt}\selectfont
\def\svgwidth{0.75\columnwidth}
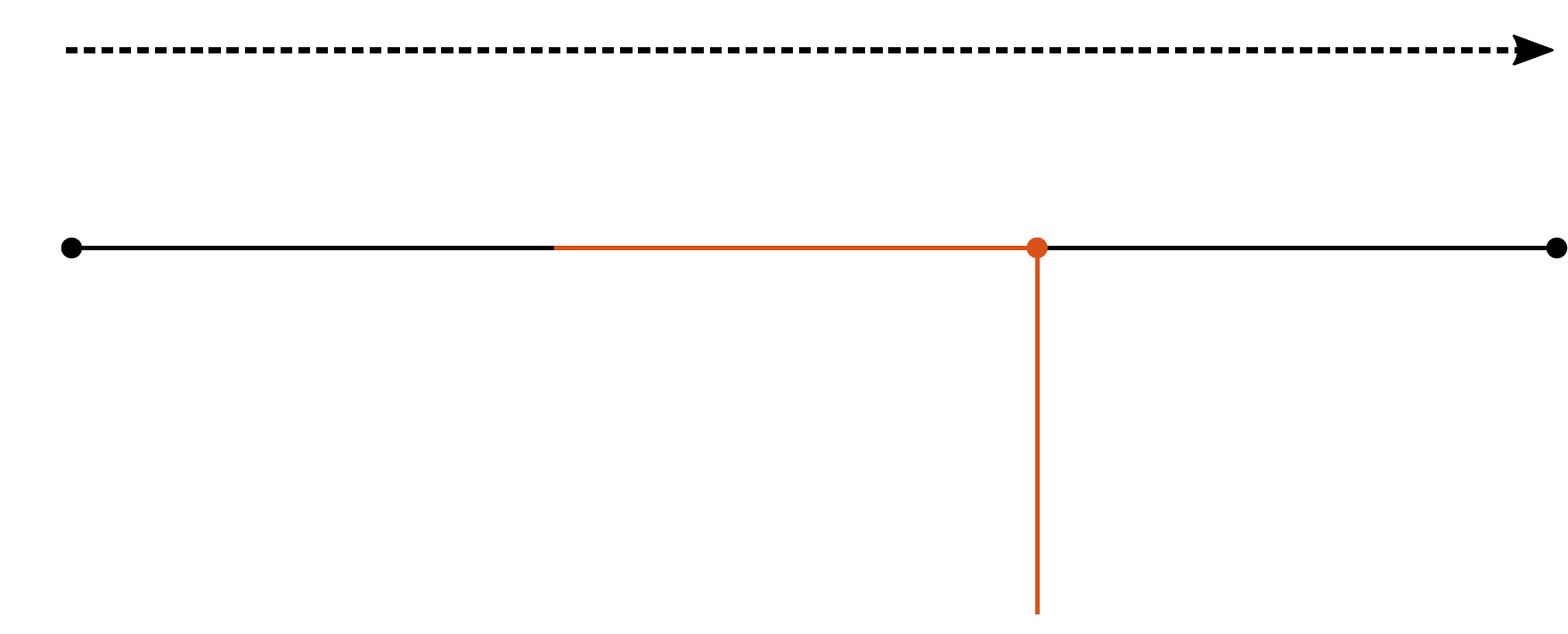
\caption{Simple DC network in ladder-form with moving trolleybuses \label{fig:dc_network}}
%
\end{figure}

Although there exists a rich literature on the alternating current
(AC) power flow problem \cite{Taheri_PFSolver}, papers on the DC PF problem
were limited in the past. In the majority of cases, they dealt with the DC PF
problem only from application point of view \cite{salihimpact,Ku_SolutionOfDCPFforNongrounded,zhou1994SimulationOfDCPowerDistrubutionSystems,jayarathna2014LoadFlowAnslysis}
 and without mentioning the solvability of the problem or uniqueness
of the solution. Regardless, the majority of these works succeeded with
utilizing Gauss-Seidel or Newton-Raphson methods to solve a well-defined
system (i.e.\ system with well-defined operative conditions given by
suitable values of variables) of DC PF non-linear equations. In recent
years, the DC PF problem receives more attention in literature, since
it is connected to low voltage DC grids,
an appealing concept in the field of smart grids and microgrids.

Garces proves uniqueness \cite{Garces_uniquenessOfPFSolution} of
the solution of the DC PF problem (using Banach fixed-point theorem),
and even convergence of Gauss-Seidel and Newton-Raphson methods \cite{Garces_ConvergendeNRMethodForDCPF,Garces_uniquenessOfPFSolution}, both under a set of reasonable and not much restricting
assumptions but enforcing sufficiently low power demands. Further,
Taylor's series expansion was used to linearize the DC
PF problem in \cite{montoya2018linear}. Taheri and Kekatos \cite{Taheri_PFSolver}
proposed three various approaches to solve the DC PF problem assuming
bounded power demands, and suggested a decision tree to select the
proper method with guaranteed convergence. The DC PF problem was
also reformulated as an optimization task \cite{tan2012DCOptimalPF,garces2019potentialFunction} and its solvability was discussed.

However, the existence of the solution of the DC PF problem generally
heavily depends on the power demands \cite{Dorfler_ElectricalNetworksAndAlgebraicGraphTheory}. Even for a primitive circuit (Figure \ref{fig:simple-circuit}), none, one, or two real-valued solutions of the DC PF problem may exist, dependent on the value of the power demand. It can be shown,
that the unknown potential $\varphi_{2}$ (the notation is borrowed
from Figure \ref{fig:simple-circuit}) is the solution of quadratic
equation ${\varphi_{2}^{2}-V\varphi_{2}+PR=0}$. Therefore, there exists a critical
value of power demand $P_\text{crit}={V^{2}}/{4R}$ and the
existence of the DC PF problem's solution depends on the ratio between
the demanded power and the critical power value. If the demanded power
is higher than the critical power value, the DC PF problem has no
solution. In such situations, the demanded power cannot be
supplied in real-world conditions due to physical restrictions.

\begin{figure}
\begin{centering}
\centering
\fontsize{9pt}{14pt}\selectfont
\def\svgwidth{0.37\columnwidth}
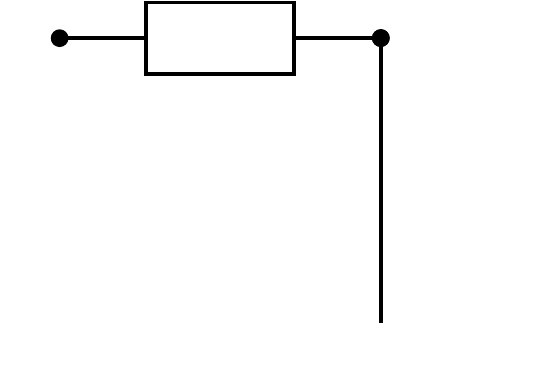
\hspace{1.5cm}
\fontsize{9pt}{14pt}\selectfont
\def\svgwidth{0.33\columnwidth}
\begingroup%
  \makeatletter%
  \providecommand\color[2][]{%
    \errmessage{(Inkscape) Color is used for the text in Inkscape, but the package 'color.sty' is not loaded}%
    \renewcommand\color[2][]{}%
  }%
  \providecommand\transparent[1]{%
    \errmessage{(Inkscape) Transparency is used (non-zero) for the text in Inkscape, but the package 'transparent.sty' is not loaded}%
    \renewcommand\transparent[1]{}%
  }%
  \providecommand\rotatebox[2]{#2}%
  \newcommand*\fsize{\dimexpr\f@size pt\relax}%
  \newcommand*\lineheight[1]{\fontsize{\fsize}{#1\fsize}\selectfont}%
  \ifx\svgwidth\undefined%
    \setlength{\unitlength}{382.5bp}%
    \ifx\svgscale\undefined%
      \relax%
    \else%
      \setlength{\unitlength}{\unitlength * \real{\svgscale}}%
    \fi%
  \else%
    \setlength{\unitlength}{\svgwidth}%
  \fi%
  \global\let\svgwidth\undefined%
  \global\let\svgscale\undefined%
  \makeatother%
  \begin{picture}(1,0.75178259)%
    \lineheight{1}%
    \setlength\tabcolsep{0pt}%
    \put(0,0){\includegraphics[width=\unitlength,page=1]{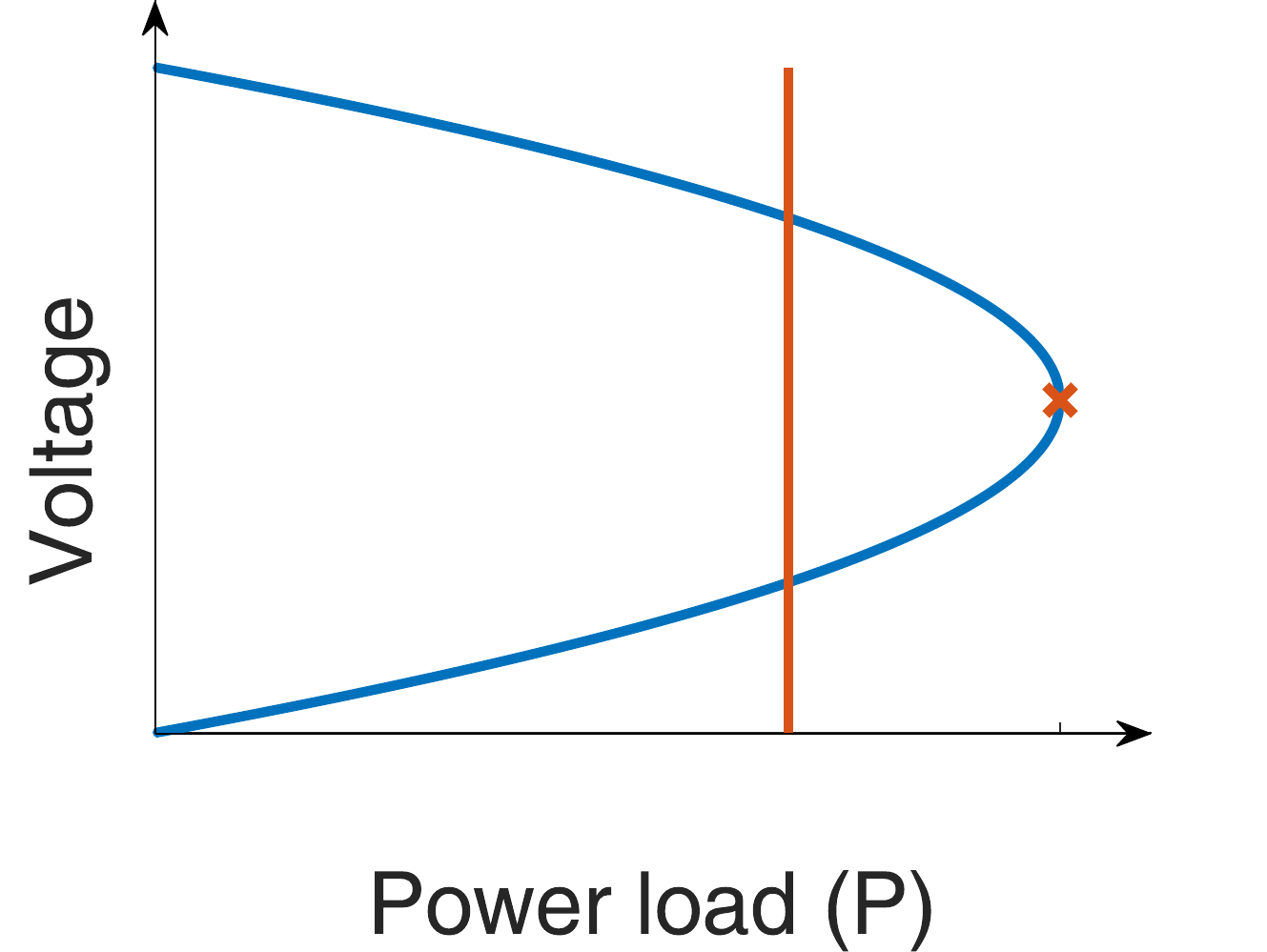}}%
    \put(0.02626888,0.67557462){\color[rgb]{0,0,0}\makebox(0,0)[lt]{\lineheight{1.25}\smash{\begin{tabular}[t]{l}$\varphi_2$\end{tabular}}}}%
    \put(0.56603931,0.10035279){\color[rgb]{0,0,0}\makebox(0,0)[lt]{\lineheight{1.25}\smash{\begin{tabular}[t]{l}$P_\text{load}$\end{tabular}}}}%
    \put(0.78851108,0.10035279){\color[rgb]{0,0,0}\makebox(0,0)[lt]{\lineheight{1.25}\smash{\begin{tabular}[t]{l}$P_\text{crit}$\end{tabular}}}}%
  \end{picture}%
\endgroup%

\par\end{centering}
\caption{Simple DC circuit (left) with unknown physical values emphasized with
red color, and its solution (right).\label{fig:simple-circuit}}
\end{figure}

In this manuscript, we test the limitation of overhead wire infrastructure
and simulate scenarios close to physical limits and even behind them.
For this reason, a method solving the DC PF problem with large power loads is
necessary.
We introduce a scaling parameter
that uniformly decreases the original demanded power values. We then propose a method to find a critical value of power demand where the overhead wire circuit is already
solvable. Such decreased power values can be seen as the maximal suppliable
power loads. 
This corresponds to finding a maximal value of the scaling parameter. We propose a two-stage strategy, where the first phase searches for the scaling parameter while the second stage verifies the solvability of the overhead wire circuit. Since the scaling parameter is one-dimensional, this allows us to use a combination of a bisection method in the first stage and Newton-Raphson method in the second stage. 

This paper is organized as follows. We revisit the mathematical
formulation of the DC PF problem in Section \ref{sec:Mathematical-Formulation},
and we introduce a method for seeking maximal suppliable power demands
to ensure the existence of the solution in Section \ref{sec:Scaling-Parameter}.
The theoretical analysis of the proposed approach is carried out in Section \ref{sec:theoreticalAnalysis}, where we show that our method is convergent under mild assumptions. 
A solution procedure employing introduced theoretical results and an efficient algorithm to find the solution with lower time requirements are proposed in Section \ref{sec:Proposed-Algorithm}. Finally, a numerical insight into the problem, a comparison with standard non-linear solver in Matlab, and a simulation of a city trolleybus network are presented in Section~ \ref{sec:Numerical-results}.

\section{Mathematical Formulation \label{sec:Mathematical-Formulation}}
The DC PF problem corresponds to finding unknown potentials of nodes and
branch currents in an electric circuit with defined voltage
and power loads/sources. There are various methods to formulate the corresponding
set of equations. Since the fundamental electrical laws (Kirchhoff's
circuit laws, Ohm's law, Power law) need to be always employed, the
resulting formulations are naturally equivalent. We use modified nodal
analysis \cite{ho1975modifiedNodalAnalysis} to form the
system of equations for the electric circuit. Hence, given the specific
application in overhead wire circuit modeling, the system (a connected
electric circuit with voltage sources and power loads) can be described
by three types of equations. Each equation describes currents going
through a selected node with unknown potentials or assigns known voltage sources to  adjacent nodes.

If node $i$ is not adjacent to any voltage source or power loads (i.e.
the adjacent nodes are connected only through resistor elements),
the corresponding equation reads
\begin{align}
\text{\ensuremath{\sum_{j\in N(i)}\frac{1}{R_{ij}}\left(\varphi_{i}-\varphi_{j}\right)}} & =0, & i & \in I,\label{eq:DCPF_resistor_nodes}
\end{align}
where $R_{ij}$ is the resistance between the $i$-th and $j$-th nodes,
$\varphi_{i}$ is the potential of the $i$-th node, $N(i)$ is the
set of adjacent nodes to the $i$-th nodes through resistor elements, and
$I$ is an index set of nodes connected only by resistor elements.

If node  $i$  is adjacent to a power load, the equation obtained using
modified nodal analysis is similar to (\ref{eq:DCPF_resistor_nodes}),
but with a non-zero right-hand side describing current going to/from the
power load/source
\begin{align}
\ensuremath{\sum_{j\in N(i)}\frac{1}{R_{ij}}\left(\varphi_{i}-\varphi_{j}\right)} & =\frac{P_{i}}{\varphi_{i}}, & i & \in J,\label{eq:DCPF_power_loads}
\end{align}
where $P_{i}$ is the demanded power by the power load adjacent to the
$i$-th node and $J$ is an index set of nodes adjacent to any power
load.

The rest of nodes are connected to voltage sources, so their potentials
are defined as
\begin{align}
\varphi_{i} & =U_{i}, & i & \in I_{0},\label{eq:DCPF_voltage_sources}
\end{align}
where $U_{i}$ is a known voltage level of voltage source at the $i$-th node
and $I_{0}$ is an index set of nodes adjacent to any voltage source.

The DC PF problem then amounts to finding the unknown potentials $\varphi_{i}, i\in\left\{ I,J,I_{0}\right\} $.
If we solve equations (\ref{eq:DCPF_resistor_nodes})--(\ref{eq:DCPF_voltage_sources})
for unknown potentials, currents through the circuit can be 
found by Ohm's and Power laws. 

System (\ref{eq:DCPF_resistor_nodes})--(\ref{eq:DCPF_voltage_sources})
can be rewritten in a compact form introducing square matrix $A$
and a vector $\boldsymbol{b}(\boldsymbol{\varphi})$ 
\begin{equation}
A\boldsymbol{\varphi}=\boldsymbol{b}(\boldsymbol{\varphi})\equiv\left[\begin{array}{c}
\boldsymbol{0}\\
\frac{\boldsymbol{P}}{\boldsymbol{\varphi}_{J}}\\
\boldsymbol{U}
\end{array}\right],\label{eq:DCPF_matrix_form}
\end{equation}
where $\boldsymbol{\varphi}$ is an ordered vector of unknown potentials
$\varphi_{i}, i\in\left\{ I,J,I_{0}\right\} $, $\boldsymbol{0}$ is
a zero vector, $\boldsymbol{P}$ is a vector of power demands with
components $P_{i}, i\in J$, $\boldsymbol{\varphi}_{J}$
is a vector with components $\varphi_{i}, i\in J$, $\frac{\boldsymbol{P}}{\boldsymbol{\varphi}_{J}}$
is the element-wise division and the vector $\boldsymbol{U}$ consists
of components $U_{i}, i\in I_{0}$.

\section{Reduction of Power Demands to Ensure Solvability\label{sec:Scaling-Parameter}}

To find a solution  of equation
(\ref{eq:DCPF_matrix_form}), we can employ numerical methods as Gauss-Seidel
method or Newton-Raphson method. 
 However, as it has been discussed in Section \ref{sec:Introduction}, the existence
of the solution is not guaranteed in all situations. Especially if the
power demands are too high, the solution of the problem
does not exist. In such a situation, the power demands of trolleybuses
cannot be fully satisfied due to physical limits or due to over-current
protections of traction substation.

To find the maximal suppliable values of power and to guarantee the
existence of solution, we adjust (\ref{eq:DCPF_power_loads}) by introducing
a vector $\boldsymbol{\alpha}$ of scaling parameters $\alpha_i$ by
\begin{align}
\ensuremath{\sum_{j\in N(i)}\frac{1}{R_{ij}}\left(\varphi_{i}-\varphi_{j}\right)} & =\frac{\alpha_i P_{i}}{\varphi_{i}}, & i & \in J.
\end{align}
Equations (\ref{eq:DCPF_resistor_nodes}) and (\ref{eq:DCPF_voltage_sources})
are without any change. To simplify matters, parameter vector $\boldsymbol{\alpha}$ is considered as all-ones vector multiplied by a scalar value, i.e. $\boldsymbol{\alpha}=\alpha\boldsymbol{1}$. While this formulation does not fully correspond to the physical equilibrium of the system, the scalar approximation makes the system computationally tractable.
Note that $\alpha=1$ corresponds to the original
problem and $\alpha=0$ to the situation when no demanded power is supplied. Again,
the problem can be reformulated into the compact form similar to (\ref{eq:DCPF_matrix_form}),
\begin{equation}
f(\boldsymbol{\varphi}(\alpha),\alpha)=A\boldsymbol{\varphi}(\alpha)-\boldsymbol{b}(\boldsymbol{\varphi}(\alpha),\alpha)=A\boldsymbol{\varphi}(\alpha)-\left[\begin{array}{c}
\boldsymbol{0}\\
\frac{\alpha\boldsymbol{P}}{\boldsymbol{\varphi}(\alpha)_{J}}\\
\boldsymbol{U}
\end{array}\right]=0,\label{eq:scaling_matrix_form-1}
\end{equation}
where we denote the dependency of $\boldsymbol{\varphi}$ on the scalar parameter
$\alpha$ by $\boldsymbol{\varphi}(\alpha).$

The value of $\alpha$ can be seen as an overload rate of the investigated
electric circuit, or in other words as a power demand satisfaction rate. If the equation (\ref{eq:scaling_matrix_form-1})
has the solution for $\alpha=1$ (i.e.\ the original problem), the investigated
circuit is not overloaded and all power loads are fully supplied by
the circuit. For $\alpha=0$, i.e.\ the power demands are completely disregarded,  (\ref{eq:scaling_matrix_form-1}) has always a trivial solution
with zero currents and with nominal voltage of traction substation
at all nodes in the circuit.

The idea of the proposed method is to find some $\alpha_{0}\in\left[0,1\right]$
such that (\ref{eq:scaling_matrix_form-1}) has a solution for $\alpha\in\left[0,\alpha_{0}\right]$
and does not have a solution for $\alpha\in\left(\alpha_{0},1\right]$.
Since the scaling parameter evenly decreases values of the demanded
power, $\alpha_{0}$ can be also defined as a maximal $\alpha\in\left[0,1\right]$,
for which (\ref{eq:scaling_matrix_form-1}) has a solution. The optimal
$\alpha_{0}$ then determines the maximal power threshold which can be provided
by the circuit and determines the circuit overload rate in some sense.
Finding such $\alpha_{0}$ corresponds to an optimization task
\begin{equation}
    \begin{aligned}
        \alpha_{0} & = \arg\max_{\alpha,\boldsymbol{\varphi}}\alpha\\
       & \text{subject to }  \\
          A\boldsymbol{\varphi}(\alpha)     & =\boldsymbol{b}(\boldsymbol{\varphi}(\alpha),\alpha)\\
         \alpha & \in\left[0,1\right] 
    \end{aligned}
\label{eq:optimization-task}
\end{equation}

The optimal $\alpha_{0}$ smaller than
one gives us the information that the overhead wire is overloaded
and determines the rate of this overload.

\section{Theorerical analysis}\label{sec:theoreticalAnalysis}

The theoretical analysis justifying the proposed approach needs to make four
assumptions. The first one reads:
\begin{itemize}
  \item[(A1)] If the system \eqref{eq:DCPF_resistor_nodes}--\eqref{eq:DCPF_voltage_sources} has a solution for some $\bm P$, then it has a solution for all $\alpha\bm P$ with $\alpha\in[0,1]$.
\end{itemize}
This assumption says that if the system can be satisfied for a power demand $\bm P$, it can be satisfied for all smaller power demands. The second assumption requires:
\begin{itemize}
  \item[(A2)] If $\varphi_i$ are fixed for all $i\in J\cup I_0$, then \eqref{eq:DCPF_resistor_nodes} has a unique solution $\varphi_i$ with $i\in I$.
\end{itemize}
Since the subnetwork $I$ has no power demands, this assumption says that if there are prescribed values of the potential outside of this subnetwork, then the potentials inside the subnetwork will be distributed in a unique way. It is always true, since we have knowledge also about currents through the power sources/loads, and thus, the unique values of potentials $\varphi_i, i\in I$ can be obtained by Kirchhoff's laws. 

For the last two assumptions, consider the set of equations
\begin{equation}
f(\boldsymbol{\varphi}(\boldsymbol{\alpha}),\boldsymbol{\alpha})=A\boldsymbol{\varphi}(\boldsymbol{\alpha})-\boldsymbol{b}(\boldsymbol{\varphi}(\boldsymbol{\alpha}),\boldsymbol{\alpha})=A\boldsymbol{\varphi}(\boldsymbol{\alpha})-\left[\begin{array}{c}
\boldsymbol{0}\\
\frac{\boldsymbol{\alpha}\circ\boldsymbol{P}}{\boldsymbol{\varphi}(\boldsymbol{\alpha})_{J}}\\
\boldsymbol{U}
\end{array}\right]=0, \label{eq:problem_multi}
\end{equation}
to allow reducing the power demands in an unequal manner. Notation $\bm \alpha\circ \bm P$ denotes the Hadamard (component-wise) product of two vectors, and the fraction of two vectors is considered also in component-wise manner.
 Define the solution mapping of \eqref{eq:problem_multi} by
$$
  S(\bm \alpha) := \{\bm \vphi\mid \bm \vphi \text{ solves \eqref{eq:problem_multi} for }\bm \alpha\}
$$
and its domain by
$$
  \mathcal D := \{\bm\alpha \mid S(\bm\alpha)\text{ is nonempty }\} \cap [0,1]^{|\bm\alpha|}
$$
Then we impose the third assumption:
\begin{itemize}
  \item[(A3)] There exists some $M$ such that $\norm{S(\bm\alpha)}\le M$ for all $\bm\alpha\in \mathcal D$.
\end{itemize}
This says that the electric potential cannot be infinite. 
The last assumption states
\begin{itemize}
  \item[(A4)] The solution mapping $S$ is continuously differentiable on the interior of $\mathcal D$.
\end{itemize}
Note that due to Assumption (A1), the domain of the solution mapping is star-shaped with centre at $\bm 0$ and the differentiability is correctly defined. This assumption is also natural and makes sense from physical point of view. To summarize, all assumptions (A1)--(A4) are well justified and always satisfied in real-world electrical systems. 

We start with the following lemma.

\begin{lemma}\label{lemma:bounded}
  Let Assumption (A3) be satisfied. Then the domain $\mathcal D$ is a closed set.
\end{lemma}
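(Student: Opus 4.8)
The plan is to establish closedness sequentially: I would take an arbitrary sequence $\boldsymbol{\alpha}_n\in\mathcal D$ with $\boldsymbol{\alpha}_n\to\boldsymbol{\alpha}^*$ and show that $\boldsymbol{\alpha}^*\in\mathcal D$, i.e.\ that $S(\boldsymbol{\alpha}^*)$ is nonempty. Since $[0,1]^{|\boldsymbol{\alpha}|}$ is closed, the limit already lies in the box, so the only thing left to verify is solvability of \eqref{eq:problem_multi} at $\boldsymbol{\alpha}^*$. For each $n$ I would pick a solution $\boldsymbol{\varphi}_n\in S(\boldsymbol{\alpha}_n)$; Assumption (A3) gives $\norm{\boldsymbol{\varphi}_n}\le M$, so $\{\boldsymbol{\varphi}_n\}$ is bounded and, by the Bolzano--Weierstrass theorem, admits a subsequence converging to some $\boldsymbol{\varphi}^*$. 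After relabelling I may assume $\boldsymbol{\varphi}_n\to\boldsymbol{\varphi}^*$, and the task becomes to pass to the limit in \eqref{eq:problem_multi}.

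The resistor rows \eqref{eq:DCPF_resistor_nodes} and the voltage-source rows \eqref{eq:DCPF_voltage_sources} are affine in $\boldsymbol{\varphi}$ and independent of $\boldsymbol{\alpha}$, so they survive the passage to the limit immediately. The delicate rows are the power-load ones, whose right-hand side $\alpha_iP_i/\varphi_i$ is continuous only where $\varphi_i\neq0$. To sidestep the singularity I would multiply each such equation through by $\varphi_i$, obtaining the cleared form $\varphi_i\sum_{j\in N(i)}\frac{1}{R_{ij}}(\varphi_i-\varphi_j)=\alpha_iP_i$, which is a polynomial in $(\boldsymbol{\varphi},\boldsymbol{\alpha})$ and hence globally continuous. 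Passing to the limit in this cleared system is then routine and yields that $\boldsymbol{\varphi}^*$ satisfies it at $\boldsymbol{\alpha}^*$. Provided every component $\varphi^*_i$ with $i\in J$ is nonzero, the cleared equation is equivalent to the original divided one, so $\boldsymbol{\varphi}^*\in S(\boldsymbol{\alpha}^*)$ and $\boldsymbol{\alpha}^*\in\mathcal D$.

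The main obstacle I anticipate is the degenerate case in which $\varphi^*_i=0$ for some $i\in J$. Here the cleared limit equation forces $\alpha^*_iP_i=0$; assuming genuine loads have $P_i\neq0$, this means $\alpha^*_i=0$, so node $i$ carries no power at $\boldsymbol{\alpha}^*$ and its defining equation degenerates to that of a pure resistor node. I would resolve this by reinterpreting $\boldsymbol{\alpha}^*$ as a power-flow problem on the reduced partition obtained by moving every vanishing index from $J$ into $I$, and then arguing that a genuine, non-degenerate solution still exists at $\boldsymbol{\alpha}^*$. The cleanest tool for the non-degeneracy is the structural (maximum-principle) property of $A$: since a passive node's potential is a conductance-weighted average of its neighbours and the source potentials $U_i>0$ propagate through the connected circuit, no potential can vanish, ruling out $\varphi^*_i=0$ for the recovered solution. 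Assumption (A2), applied to the enlarged resistor subnetwork once the surviving potentials are fixed, then pins down the remaining values uniquely.

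In summary, the argument is a standard compactness-plus-continuity scheme---sequential closedness, boundedness from (A3), extraction of a convergent subsequence, and limit passage in the denominator-cleared equations---and the only genuinely delicate point is excluding vanishing denominators in the limit, which I expect to occupy the bulk of the proof.
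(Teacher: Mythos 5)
Your main line --- sequential closedness, a bounded solution sequence from (A3), Bolzano--Weierstrass, and passage to the limit --- is exactly the paper's proof; the paper simply asserts that both sides of \eqref{eq:problem_multi} are continuous and concludes $\bm\vphi^*\in S(\bm\alpha^*)$. You are right that this glosses over a possible vanishing denominator $\varphi^*_i=0$, $i\in J$, in the limit (which can genuinely occur: on the low-voltage branch of the paper's two-node example, $\varphi_2\to 0$ as the load is scaled to zero), so clearing denominators is a sensible refinement, and up to that point your argument is at least as sound as the published one.

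The gap is in your handling of the degenerate case, which is the only part that goes beyond the paper. When $\varphi^*_i=0$, the cleared equation $\varphi_i\sum_{j\in N(i)}R_{ij}^{-1}(\varphi_i-\varphi_j)=\alpha_iP_i$ collapses to $0=\alpha^*_iP_i$: it is vacuously satisfied and retains no current-balance information at node $i$, so it does not ``degenerate to that of a pure resistor node,'' and the limit $\bm\vphi^*$ need not solve the reduced problem you describe. More seriously, the reduced problem (node $i$ moved into $I$, with $\alpha^*_i=0$) is just \eqref{eq:problem_multi} at $\bm\alpha^*$ restated, so asserting that ``a genuine, non-degenerate solution still exists at $\bm\alpha^*$'' is asserting precisely the nonemptiness of $S(\bm\alpha^*)$ that the lemma asks you to prove --- the argument is circular at exactly the point where it needed to deliver. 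The maximum-principle observation yields positivity of the reclassified potentials only for a solution you already possess (and only if the neighbouring $J$-node potentials are positive, which the averaging argument does not control, since the $J$-equations are not averaging equations); it does not produce the solution. Neither (A2), which only determines the $I$-block once the boundary data are fixed, nor (A1), which concerns uniform scaling of all loads, supplies the missing existence step. Since the paper ignores the degenerate case altogether, your write-up is no less rigorous than the original, but the additional step you add is not yet a proof.
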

\begin{proof}
  Consider any sequence $\bm\alpha_k\to\bm \alpha$ such that $\bm\alpha_k\in\mathcal D$ for all $k$. We need to show that $\bm\alpha\in\mathcal D$. There are some corresponding $\bm\varphi_k\in S(\bm\alpha_k)$. Due to Assumption (A3), they are bounded and we may select a convergent subsequence, denoted without loss of generality by the same indices, $\bm \vphi_k \to\bm \vphi$. Since both sides of \eqref{eq:problem_multi} are continuous, we have $\bm\vphi\in S(\bm\alpha)$, which implies $\bm\alpha \in\mathcal D$.
\end{proof}

Now we are able to prove the following theorem.

\begin{theorem}\label{theorem:main}
  Let Assumptions (A1)--(A4) be satisfied. Then there are two possibilities:
  \begin{itemize}
    \item System \eqref{eq:scaling_matrix_form-1} has a solution for all $\alpha\in[0,1]$.
    \item System \eqref{eq:scaling_matrix_form-1} has a solution for all $\alpha\in[0,\alpha_0]$ but no solution for any $\alpha\in(\alpha_0,1]$ for some $\alpha_0\in(0,1)$. In such a case, the Jacobian $\nabla_\vphi f(\bm\vphi(\alpha), \alpha)$ is regular for all $\alpha\in[0,\alpha_0)$ but $\nabla_\vphi f(\bm\vphi(\alpha_0), \alpha_0)$ is singular.
  \end{itemize}
\end{theorem}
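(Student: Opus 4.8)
The plan is to study the solvable set $\mathcal A:=\{\alpha\in[0,1]\mid \eqref{eq:scaling_matrix_form-1}\text{ has a solution}\}$, show it is a closed interval containing $0$, and read the dichotomy off its right endpoint. First I would observe that $\mathcal A=\{\alpha\mid \alpha\bm 1\in\mathcal D\}$ is the preimage of the closed set $\mathcal D$ (Lemma~\ref{lemma:bounded}) under the continuous map $\alpha\mapsto\alpha\bm 1$, hence closed, while Assumption~(A1) makes it an interval; thus $\mathcal A=[0,\alpha_0]$ with $\alpha_0=\max\mathcal A$, and if $\alpha_0=1$ we are in the first case. Before proceeding I would record that $A$ is regular: ordering the nodes as $I,J,I_0$, the $I_0$-rows form an identity block, so $\det A=\det\begin{pmatrix}L_{II}&L_{IJ}\\ L_{JI}&L_{JJ}\end{pmatrix}$ is the determinant of the Dirichlet (grounded) Laplacian obtained by grounding the voltage nodes $I_0$, which is positive definite for a connected circuit with $I_0\neq\emptyset$ (Assumption~(A2) is precisely invertibility of the $I$-block $L_{II}$). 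Since $\nabla_\vphi b$ vanishes at $\alpha=0$, we have $\nabla_\vphi f(\bm\vphi(0),0)=A$, so the implicit function theorem applied at the trivial solution at $\alpha=0$ produces solutions for all small $\alpha>0$; hence $\alpha_0>0$, and in the non-trivial case $\alpha_0\in(0,1)$.

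Next I would dispatch the singularity at $\alpha_0$ by contraposition. By closedness of $\mathcal D$ there is at least one solution $\bm\vphi(\alpha_0)$ at $\alpha_0$; if $\nabla_\vphi f(\bm\vphi(\alpha_0),\alpha_0)$ were regular, the implicit function theorem would continue this solution to an interval $(\alpha_0-\delta,\alpha_0+\delta)$, contradicting that no solution exists for $\alpha>\alpha_0$. The same reasoning shows that \emph{every} solution present at $\alpha_0$ has singular Jacobian, so the claim is unambiguous.

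The core is regularity on $[0,\alpha_0)$, and here the vector formulation \eqref{eq:problem_multi} does the real work. On $\operatorname{int}\mathcal D$ Assumption~(A4) says $S$ is single-valued and $C^1$, so differentiating $f(S(\bm\alpha),\bm\alpha)=0$ gives $\nabla_\vphi f\,\nabla_{\bm\alpha}S+\nabla_{\bm\alpha}f=0$. Suppose $\nabla_\vphi f$ were singular at such a point, with a nonzero left null vector $w$, i.e. $w^\top\nabla_\vphi f=0$. Left-multiplying the differentiated identity by $w^\top$ kills the first term and leaves $w^\top\nabla_{\bm\alpha}f=0$. Now $\nabla_{\bm\alpha}f$ has $i$-th column $-(P_i/\vphi_i)e_i$ for $i\in J$ and is zero otherwise, so $w^\top\nabla_{\bm\alpha}f=0$ forces $w_i=0$ for every $i\in J$ (recall $P_i>0$, $\vphi_i>0$). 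But $\nabla_\vphi f=A-\nabla_\vphi b$ differs from $A$ only in the $J$-rows (the diagonal $-\alpha_iP_i/\vphi_i^2$), which are annihilated once $w_J=0$; hence $w^\top A=0$, and regularity of $A$ forces $w=0$, a contradiction. Thus $\nabla_\vphi f$ is regular at every point of $\operatorname{int}\mathcal D$. It is exactly the componentwise parameter $\bm\alpha$ that supplies enough independent perturbation directions to conclude $w_J=0$; the scalar parameter alone would yield only the single relation $\sum_{i\in J}w_iP_i/\vphi_i=0$, which is why \eqref{eq:problem_multi} is introduced.

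The remaining, genuinely delicate point is to guarantee $\alpha\bm 1\in\operatorname{int}\mathcal D$ for every $\alpha\in(0,\alpha_0)$, so that the previous paragraph applies along the scalar ray. I would note $\bm 0\in\operatorname{int}\mathcal D$ (again from the implicit function theorem at $A$) and that $\mathcal D$ is star-shaped about $\bm 0$ by (A1). The subtlety is that star-shapedness with an interior centre does not by itself force an open segment to be interior, so I cannot merely quote convexity; the scenario to exclude is that the ray touches $\partial\mathcal D$ at some $\bar\alpha<\alpha_0$, i.e. a fold (turning point) of the branch $\bm\vphi(\alpha)$ with $\nabla_\vphi f$ singular there. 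I expect this to be the main obstacle. The way through is single-valuedness in (A4): near such a fold the solution curve would double back and provide two distinct solutions of \eqref{eq:problem_multi} for parameters on one side of $\bar\alpha$, contradicting that $S$ is a single-valued function on $\operatorname{int}\mathcal D$. Continuing the branch from $\alpha=0$, regular by the left-null argument as long as it stays interior, using (A3) to keep $\bm\vphi(\alpha)$ bounded (hence convergent along subsequences, exactly as in Lemma~\ref{lemma:bounded}) to pass to limits, I would propagate regularity and interiority up to $\alpha_0$ and conclude that $\nabla_\vphi f(\bm\vphi(\alpha),\alpha)$ is regular throughout $[0,\alpha_0)$ while degenerating precisely at $\alpha_0$.
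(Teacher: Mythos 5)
Your argument is correct in substance but takes a genuinely different route from the paper's for the central regularity claim. The paper eliminates $\bm\vphi_I$ via the Schur complement of $A_{II}$, reduces \eqref{eq:problem_multi} to $g(\bm\vphi_J)=\bm\alpha\circ\bm P$, identifies a right-hand-side perturbation $\bm r$ with the parameter shift $\widetilde\alpha_i=\alpha_i+r_i/P_i$, and then invokes the characterization of \cite[Theorem 1C.3]{dontchev2009implicit} (solution map differentiable iff Jacobian regular). You replace this machinery with an elementary left-null-vector computation: differentiating $f(S(\bm\alpha),\bm\alpha)=0$ and pairing with a left null vector $w$ of $\nabla_\vphi f$ forces $w_J=0$, because $\nabla_{\bm\alpha}f$ has the $|J|$ independent columns $-(P_i/\vphi_i)e_i$ --- the same exploitation of componentwise perturbations that motivates \eqref{eq:problem_multi} in the paper, as you correctly observe --- and then $w^\top A=0$, so regularity of the grounded Laplacian $A$ kills $w$. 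This proves directly the only implication actually needed on $[0,\alpha_0)$ (differentiability of $S$ implies regularity of $\nabla_\vphi f$), avoids both the Schur reduction and the cited theorem, and makes explicit two facts the paper leaves unstated: the regularity of $A$ itself and, via the implicit function theorem at $\alpha=0$, that $\alpha_0>0$. Your contraposition argument for singularity at $\alpha_0$ is also cleaner than the paper's remark that ``$S$ cannot be differentiable on the boundary.'' One small correction: the paper allows negative power demands (regenerative braking), so the hypothesis you need is $P_i\neq 0$ rather than $P_i>0$.

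The step you flag as delicate --- that $\alpha\bm 1\in\operatorname{int}\mathcal D$ for every $\alpha\in(0,\alpha_0)$, without which (A4) cannot be invoked along the scalar ray --- is indeed not closed by your fold argument, which remains a sketch: a boundary point of the multi-parameter domain $\mathcal D$ lying on the ray need not manifest as a turning point of the one-dimensional branch $\alpha\mapsto\bm\vphi(\alpha)$, and, as you say, star-shapedness about an interior centre does not make open segments interior. You should be aware, however, that the paper has exactly the same gap: it passes from ``$S$ is continuously differentiable on the interior of $\mathcal D$'' to ``$S$ is continuously differentiable on $[0,\alpha_0)$'' without comment. Your proposal is therefore no weaker than the published proof on this point and is more candid about where the difficulty sits; closing it properly would require either strengthening (A4) or adding an assumption guaranteeing that the diagonal segment below $\alpha_0$ is interior to $\mathcal D$.
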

\begin{proof}
  We realize that due to Assumption (A1), either the system \eqref{eq:scaling_matrix_form-1} has a solution for all $\alpha\in[0,1]$ or there is some $\alpha_0$ such that the system has a solution on $[0,\alpha_0)$ and no solution on $(\alpha_0,1]$. Lemma \ref{lemma:bounded} implies that it will have a solution even for $\alpha_0$ in the second case.

  It remains to show in which cases the Jacobians are regular. First, we write \eqref{eq:problem_multi} in a more compact form
  \begin{equation}\label{eq:problem_compact1}
    \begin{pmatrix} A_{II} & A_{IJ} & A_{II_0} \\ A_{JI} & A_{JJ} & A_{JI_0} \\ 0 & 0 & E \end{pmatrix} \begin{pmatrix} \bm \vphi_I \\ \bm \vphi_J \\ \bm \vphi_{I_0} \end{pmatrix} = \begin{pmatrix} \bm 0 \\ \frac{1}{\bm\vphi_J}\left(\bm \alpha\circ\bm P \right) \\ \bm U \end{pmatrix},
  \end{equation}
  where $E$ stands for the identity matrix. The matrix on the left-hand side does not depend on the potentials $\bm\vphi$. Due to Assumption (A2), the square matrix $A_{II}$ is regular and we have
  $$
    \bm\vphi_I = A_{II}^{-1}\left(- A_{IJ}\bm\vphi_J - A_{II_0}\bm\vphi_{I_0}\right).
  $$
  Plugging this back to \eqref{eq:problem_compact1} yields
  $$
    A_{JJ}\bm\vphi_J + A_{JI}A_{II}^{-1}\left(- A_{IJ}\bm\vphi_J - A_{II_0}\bm \vphi_{I_0}\right) + A_{JI_0}\bm \vphi_{I_0} = \frac{1}{\bm\vphi_J} \left(\bm\alpha\circ\bm P \right)
  $$
  and in a simpler form
  \begin{equation}\label{eq:problem_compact2}
    g(\bm \vphi) := \bm\vphi_J\circ\left(A_{JJ} -  A_{JI}A_{II}^{-1}A_{IJ}\right) \bm\vphi_J  -  \bm\vphi_J\circ \left(A_{JI}A_{II}^{-1}A_{II_0}\bm U - A_{JI_0}\bm U \right) = \bm \alpha\circ\bm P.
  \end{equation}
 
  Now we perturb the right hand side of \eqref{eq:problem_compact2} by some $\bm r$ to get
  \begin{equation}\label{eq:problem_pert1}
    \bm\vphi_J\circ\left(A_{JJ} -  A_{JI}A_{II}^{-1}A_{IJ}\right) \bm\vphi_J  -  \bm\vphi_J\circ \left(A_{JI}A_{II}^{-1}A_{II_0}\bm U - A_{JI_0}\bm U \right) = \bm \alpha\circ\bm P + \bm r.
  \end{equation}
  We consider $\bm\alpha$ as a fixed parameter and perturb only $\bm r$. We will show that the solution mapping $\tilde S: \bm r\mapsto \bm\vphi_J$ is continuously differentiable around $\bm 0$. Equality \eqref{eq:problem_pert1} amounts to
  $$
    \bm\vphi_J\circ\left(A_{JJ} -  A_{JI}A_{II}^{-1}A_{IJ}\right) \bm\vphi_J  -  \bm\vphi_J\circ \left(A_{JI}A_{II}^{-1}A_{II_0}\bm U - A_{JI_0}\bm U \right) = (\bm \alpha + \frac{\bm r}{\bm P})\circ\bm P.
  $$
  This, due to the same reasons as above, is equivalent to 
$f(\boldsymbol{\varphi}(\widetilde{\boldsymbol{\alpha}}),\widetilde{\boldsymbol{\alpha}})=0$ with $\widetilde\alpha_i=\alpha_i+\frac{r_i}{P_i},i\in J$.
  
  From \cite[Theorem 1C.3]{dontchev2009implicit} we obtain that $\tilde S$ is continuously differentiable at $\bm 0$ if and only if $\nabla_\vphi g(\bm\vphi(\bm\alpha))$, defined in \eqref{eq:problem_compact2}, is regular. Due to the equivalence of \eqref{eq:problem_multi} and \eqref{eq:problem_compact2} and the equivalence of \eqref{eq:problem_pert1} and $f(\boldsymbol{\varphi}(\widetilde{\boldsymbol{\alpha}}),\widetilde{\boldsymbol{\alpha}})=0$, we realize that $\tilde S$ is continuously differentiable at $\bm 0$ if and only if $S$ is continuously differentiable at $\bm\alpha$. Combining these two facts we obtain that $S$ is continuously differentiable at $\bm\alpha$ if and only if $\nabla_\vphi g(\bm\vphi(\bm\alpha))$ is regular. Assumption (A4) states that $S$ is continuously differentiable on $[0,\alpha_0)$. Since $\alpha_0$ lies on the boundary of the domain, $S$ cannot be differentiable there. This whole paragraph implies that $\nabla_\vphi g(\bm\vphi(\bm\alpha))$ is regular for all $\alpha\in[0,\alpha_0)$ but $\nabla_\vphi g(\bm\vphi(\bm\alpha_0))$ is singular.
  
  The regularity of $\nabla_\vphi g$ is equivalent to the regularity of the Jacobian of
  $$
    \hat g(\bm\vphi_J) := \left(A_{JJ} -  A_{JI}A_{II}^{-1}A_{IJ}\right) \bm\vphi_J  -  \left(A_{JI}A_{II}^{-1}A_{II_0}\bm U - A_{JI_0}\bm U \right) - \frac{1}{\bm\vphi_J}(\bm \alpha\circ\bm P).
  $$
  We have
  $$
    \nabla_\vphi \hat g(\bm\vphi_J(\bm\alpha)) = A_{JJ} -  A_{JI}A_{II}^{-1}A_{IJ} + \diag\left(\frac{1}{\bm\vphi_J^2}(\bm \alpha\circ\bm P)\right),
  $$  
  where $\diag(\cdot)$ makes a diagonal matrix from a vector and the $\bm\vphi_J^2$ is understood component-wise. Define now the function from \eqref{eq:problem_compact1} by
  $$
    \tilde g(\bm\vphi) := \begin{pmatrix} A_{II} & A_{IJ} & A_{II_0} \\ A_{JI} & A_{JJ} & A_{JI_0} \\ 0 & 0 & E \end{pmatrix} \begin{pmatrix} \bm \vphi_I \\ \bm \vphi_J \\ \bm \vphi_{I_0} \end{pmatrix} - \begin{pmatrix} \bm 0 \\ \frac{1}{\bm\vphi_J}\left(\bm \alpha\circ\bm P \right) \\ \bm U \end{pmatrix}.
  $$
  Then
  $$
    \nabla_\vphi \tilde g(\bm\vphi(\bm\alpha)) = \begin{pmatrix} A_{II} & A_{IJ} & A_{II_0} \\ A_{JI} & A_{JJ} + \diag (\frac{1}{\bm\vphi_J^2}(\bm \alpha\circ\bm P))& A_{JI_0} \\ 0 & 0 & E \end{pmatrix} 
  $$
  and therefore
  $$
    \aligned
    \det \nabla_\vphi \tilde g(\bm\vphi(\bm \alpha)) &= \det \begin{pmatrix} A_{II} & A_{IJ} \\ A_{JI} & A_{JJ} + \diag (\frac{1}{\bm\vphi_J^2}(\bm \alpha\circ\bm P))\end{pmatrix} \\
    &= \det A_{II} \det \left(A_{JJ} + \diag \left(\frac{1}{\bm\vphi_J^2}(\bm\alpha\circ \bm\vphi)\right) - A_{JI}A_{II}^{-1}A_{IJ}\right),
    \endaligned
  $$
  where the second equality follows from the theory of Schur's complement. Assumption (A2) says that $\det A_{II}\neq 0$ and therefore, $\det \nabla_\vphi \tilde g(\bm\vphi(\bm\alpha))\neq 0$ if and only if $\det \nabla_\vphi g(\bm\vphi(\bm\alpha))\neq 0$. But this means that $\nabla_\vphi \tilde g(\bm\vphi(\bm\alpha))$ is regular if and only if $\nabla_\vphi g(\bm\vphi(\bm\alpha))$ is regular. Since $f(\bm\vphi(\alpha),\alpha) = g(\bm\vphi(\bm \alpha))$, the end of the previous paragraph implies the theorem statement.
\end{proof}

Assuming (A1)--(A4), Theorem \ref{theorem:main} ensures that the optimization task \eqref{eq:optimization-task} has nice properties which we will utilize to suggest a solution algorithm in the next section and to further investigate and numerically demonstrate in Section \ref{sec:Numerical-results}.

\section{Solution Procedure\label{sec:Proposed-Algorithm}}


In this section, we propose a method with simple implementation and low computation requirements. Our method is based on Theorem 2, which states that there is some $\alpha_0$ such that system \eqref{eq:scaling_matrix_form-1} is solvable for all $\alpha\in[0,\alpha_0]$ but not solvable for any larger $\alpha$. Therefore, we start with $\widetilde{\alpha}=0$ and incrementally increase the value of $\widetilde{\alpha}$ by some constant gain $\Delta\alpha$. After each update of the scaling parameter, the Newton-Raphson method is used to solve \eqref{eq:scaling_matrix_form-1}, with the $k$-th iteration evaluated as
\begin{align}
\boldsymbol{\varphi}_{\mathrm{k+1}} & =\boldsymbol{\varphi}_{\mathrm{k}}-\left(\nabla_{\boldsymbol{\varphi}}f\left(\boldsymbol{\varphi}_{\mathrm{k}},\widetilde{\alpha}\right)\right)^{-1}f\left(\boldsymbol{\varphi}_{k},\widetilde{\alpha}\right)\label{eq:Newton-update}\\
 & =\boldsymbol{\varphi}_{\mathrm{k}}-\left(A-\nabla_{\boldsymbol{\varphi}}\boldsymbol{b}\left(\boldsymbol{\varphi}_{\mathrm{k}},\widetilde{\alpha}\right)\right)^{-1}\left(A\boldsymbol{\varphi}_{\mathrm{k}}-\boldsymbol{b}\left(\boldsymbol{\varphi}_{\mathrm{k}},\widetilde{\alpha}\right)\right).\nonumber 
\end{align}

Theorem 2 also states that $\nabla_{\boldsymbol{\varphi}}f\left(\boldsymbol{\varphi}_{\mathrm{k}},\widetilde{\alpha}\right)$ converges to a singular matrix as $\widetilde\alpha \to \alpha_0$. Therefore, if we observe that the determinant of this matrix (which is the same as the one in \eqref{eq:Newton-update}) goes to zero, we imply that we are close to the optimal scaling parameter $\alpha_0$. If this happens, the previous value of $\widetilde{\alpha}$ is declared as the optimal value with  tolerance equal to $\Delta\alpha$. This procedure is summarized in Algorithm \ref{alg:Basic-algorithm}.

\begin{algorithm}[!ht]
\begin{algorithmic}[1]

\STATE  Set constant gain $\Delta\alpha$ to a proper value 
and $\widetilde{\alpha}=0$

\STATE  Solve \eqref{eq:scaling_matrix_form-1} for the fixed  $\widetilde{\alpha}$ using the NR method \label{alg:basic:NR method}

\STATE  If the NR method converges and $\widetilde{\alpha}<1$, set $\widetilde{\alpha}=\widetilde{\alpha}+\Delta\alpha$ and  GO TO line \ref{alg:basic:NR method}.

\STATE  Else $\widetilde{\alpha}-\Delta\alpha$ is declared as the optimal value with the tolerance $\Delta\alpha$ and with the corresponding solution $\boldsymbol\varphi (\widetilde{\alpha}-\Delta\alpha)$

\end{algorithmic}

\caption{Basic solution strategy\label{alg:Basic-algorithm}}
\end{algorithm}

The suggested procedure is convergent under the assumptions of Theorem 2. However, it has a significant drawback in the case when the original DC PF problem \eqref{eq:DCPF_matrix_form} is solvable (i.e. there exists solution of \eqref{eq:scaling_matrix_form-1} for $\alpha=1$). Then, all discretized values of $\alpha$ need to be passed before the optimal $\alpha_0=1$ is encountered. For this reason, we propose an efficient solution procedure in the next subsection.

\subsection{Efficient Solution Algorithm}

This efficient procedure replaces the incremental increase of $\widetilde\alpha$ by a variant of a bisection method. The complete proposed pseudo-code is shown in Algorithm~\ref{alg:Proposed-algorithm}
and its C++ implementation can be found on official Eclipse SUMO GitHub repository\footnote{\url{https://github.com/eclipse/sumo/blob/master/src/utils/traction_wire/Circuit.cpp}}.

\begin{algorithm}
\begin{algorithmic}[1]

\STATE  Initialize $\boldsymbol{\varphi} := \boldsymbol{\varphi}_0$, the scaling parameter $\widetilde\alpha:=1$, 
the best found solution $\widehat{\alpha}:=0$, and an empty buffer of non-admissible parameter values $S_{\alpha}:=[\,]$.

\STATE  Set tolerances $\Delta_\text{con}:=10^{-8}$, $\Delta_\text{opt}:=10^{-5}$,
$\Delta_\text{act}$:= $10^{-2}$, the maximal number of NR iterations $m_\text{NR}:=10$ and coefficient of bisection $\mathrm{c}_\text{bi}:=0.5$.

\WHILE{true}

\FOR {$\mathrm{iter} = 1,\dots,m_\text{NR}$}

\STATE  $\boldsymbol{\varphi}:=\boldsymbol{\varphi}-\left(\nabla_{\boldsymbol{\varphi}}f\left(\boldsymbol{\varphi},\widetilde\alpha\right)\right)^{-1}f(\boldsymbol{\varphi},\widetilde\alpha)$
\COMMENT{NR update}

\IF[NR converged] {$\left\Vert f(\boldsymbol{\varphi},\widetilde\alpha)\right\Vert <\Delta_\text{con}$ 
}


\STATE  $\widehat{\alpha}:=\widetilde\alpha$, $\widehat{\boldsymbol{\varphi}}:=\boldsymbol{\varphi}$,

\STATE  \textbf{break}

\ELSIF[NR failed to converge] {$\mathrm{iter} = m_\text{NR}$
}

\STATE  $S_{\alpha}[\text{end}+1]:=\widetilde\alpha$ \label{alg:state:stack_no_solution.push_back()}
\COMMENT{append $\widetilde\alpha$ to the buffer}

\ENDIF

\ENDFOR

\IF {$S_{\alpha}$ is empty}

\STATE  \textbf{return} $\widehat{\alpha}$, $\widehat{\boldsymbol{\varphi}}$
\COMMENT{end of algorithm with optimal $\widehat{\alpha}$ and corresponding ${\boldsymbol{\varphi}(\widehat\alpha)}$}
\ENDIF

\IF {$\left\Vert \widehat{\alpha}-S_{\alpha}[\text{end}]\right\Vert \geq \Delta_\text{act}$}

\STATE $\widetilde\alpha:=\widehat{\alpha}+\mathrm{c}_\text{bi}\cdot(S_{\alpha}[\text{end}]-\widehat{\alpha})$ 
\COMMENT{bisection at $\mathrm{c}_\text{bi}$}
\ELSE[moving towards ill-conditioned problem]

\STATE $m_\text{NR} := 2\cdot m_\text{NR}$ \label{alg:state:max_NR_iter_adaptivity}
\COMMENT{progressive increase of maximal number of NR iters}

\STATE $\Delta_\text{act}$ := $\Delta_\text{act}$/10 \label{alg:state:tolerance_adaptivity}
\COMMENT{progressive decrease of optimality tolerance}

\IF {$\Delta_\text{act} < \Delta_\text{opt}$}

\STATE \textbf{return} $\widehat{\alpha}$, $\widehat{\boldsymbol{\varphi}}$
\COMMENT{end of algorithm with optimal $\widehat{\alpha}$ and corresponding ${\boldsymbol{\varphi}(\widehat\alpha)}$}
\ENDIF

\STATE  $\widetilde\alpha:=$ $S_{\alpha}[\text{end}]$

\STATE  $S_{\alpha}:=S_{\alpha}[1:\text{end}-1]$
\COMMENT{remove the last element of $S_{\alpha}$ buffer}

\ENDIF

\ENDWHILE

\end{algorithmic}

\caption{Algorithm for solving the DC PF problem\label{alg:Proposed-algorithm}}
\end{algorithm}

The \texttt{while} loop determines the optimal value of the scaling parameter. It starts with $\widetilde\alpha=1$ and makes use of the NR method to solve \eqref{eq:scaling_matrix_form-1}. The NR updates are inside the \texttt{for} loop. If the NR method succeeds (within a tolerance $\Delta_\text{con}$), the lower bound $\widehat\alpha$ is updated. In the opposite case the buffer of failed candidate values $S_\alpha$ is extended by the current scaling parameter $\widetilde\alpha$. If the buffer is empty (which happens if the NR method converged for $\widetilde\alpha=1$), we found a solution of \eqref{eq:scaling_matrix_form-1} and the algorithm terminates.

If the buffer is not empty, it means that there are some scaling parameters for which the solution does not exist. In most cases, we determine the new $\widetilde\alpha$ as an interpolation between $\widehat\alpha$ and the smallest value of the buffer $S_\alpha$. However, if these two values are close to each other (measured by the prescribed tolerance $\Delta_\text{act}$), the NR may require more iterations to converge since the problem is ill-conditioned due to Theorem \ref{theorem:main}. In such a case, we double the allowed number of iterations for the NR algorithm. This is depicted in the last few lines of Algorithm \ref{alg:basic:NR method}.

Since the convergence of the NR method also depends on the initial estimate, we initialize it by assuming constant term $\boldsymbol{b}\left(U,1\right)$ in \eqref{eq:scaling_matrix_form-1}, where $U$ is the known nominal voltage of the traction substation, and then by solving the linear equation $A\boldsymbol{\varphi}_0=\boldsymbol{b}\left(U,1\right)$ for $\boldsymbol{\varphi}_0$. In the numerical implementation, we also incorporate current limits of traction substations (over-current protection) and voltage limits of the network in a simple way. However, for the sake of simplicity we will ignore them here.

The proposed algorithm is simple to implement and its computation requirements are lower than those required by classical approaches such as
interior-point methods. Furthermore, the initialization of $\widetilde{\alpha}=1$ is strongly beneficial as the proposed algorithm solves the original unlimited DC PF problem within one iteration. Then the solution is found quickly without any additional
computational requirements.

\section{Simulation and Numerical Results \label{sec:Numerical-results}}

For numerical validation, we consider toy examples as well as a simulation of real-world traffic.

\subsection{Toy examples}

The two toy test cases of the DC PF problem are motivated by a real trolleybus network. The first test case (Figure \ref{fig:scheme-test-case-1})
contains four vehicles with defined power demands (\SI{260}{\kilo\watt}, \SI{20}{\kilo\watt}, \SI{30}{\kilo\watt} and \SI{-5}{\kilo\watt} due to regenerative braking) running under overhead wire section connected to the traction substation represented by voltage source of \SI{600}{\volt}
using one connection point. The second test case (Figure~\ref{fig:scheme-test-case-2})
contains ten vehicles with uniform power demands of \SI{250}{\kilo\watt} and two connection points to the traction substation on the voltage level \SI{600}{\volt}. The mean distance between adjacent (neighboring) vehicles is in the order of hundreds of metres in both test cases; this corresponds to the mean resistance of \SIrange{0.023}{0.23}{\ohm} of conductor wire between voltage nodes.

\begin{figure}
\begin{centering}
\begin{minipage}[t]{0.39\columnwidth}%
\subfloat[\label{fig:scheme-test-case-1}]{\begin{centering}
\includegraphics[scale=0.50]{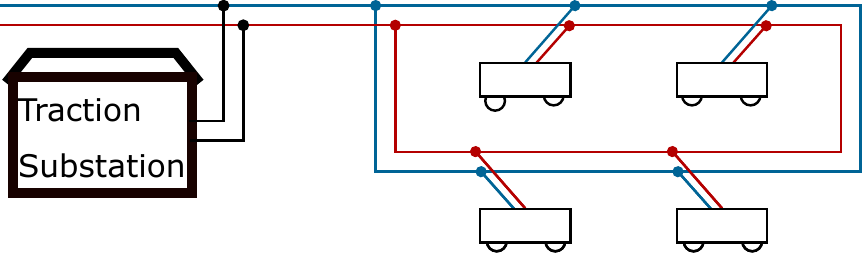}
\par\end{centering}
}%
\end{minipage}%
\begin{minipage}[t]{0.60\columnwidth}%
\subfloat[\label{fig:scheme-test-case-2}]{\begin{centering}
\includegraphics[scale=0.50]{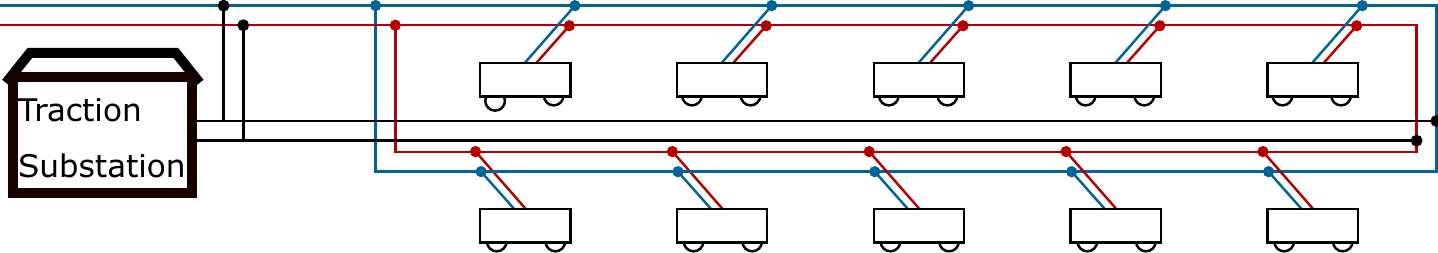}
\par\end{centering}
}%
\end{minipage}
\par\end{centering}
\begin{centering}
\caption{The scheme of the two toy test cases used for numerical analysis.}
\par\end{centering}
\end{figure}

\begin{figure}
\includegraphics[width=1\textwidth]{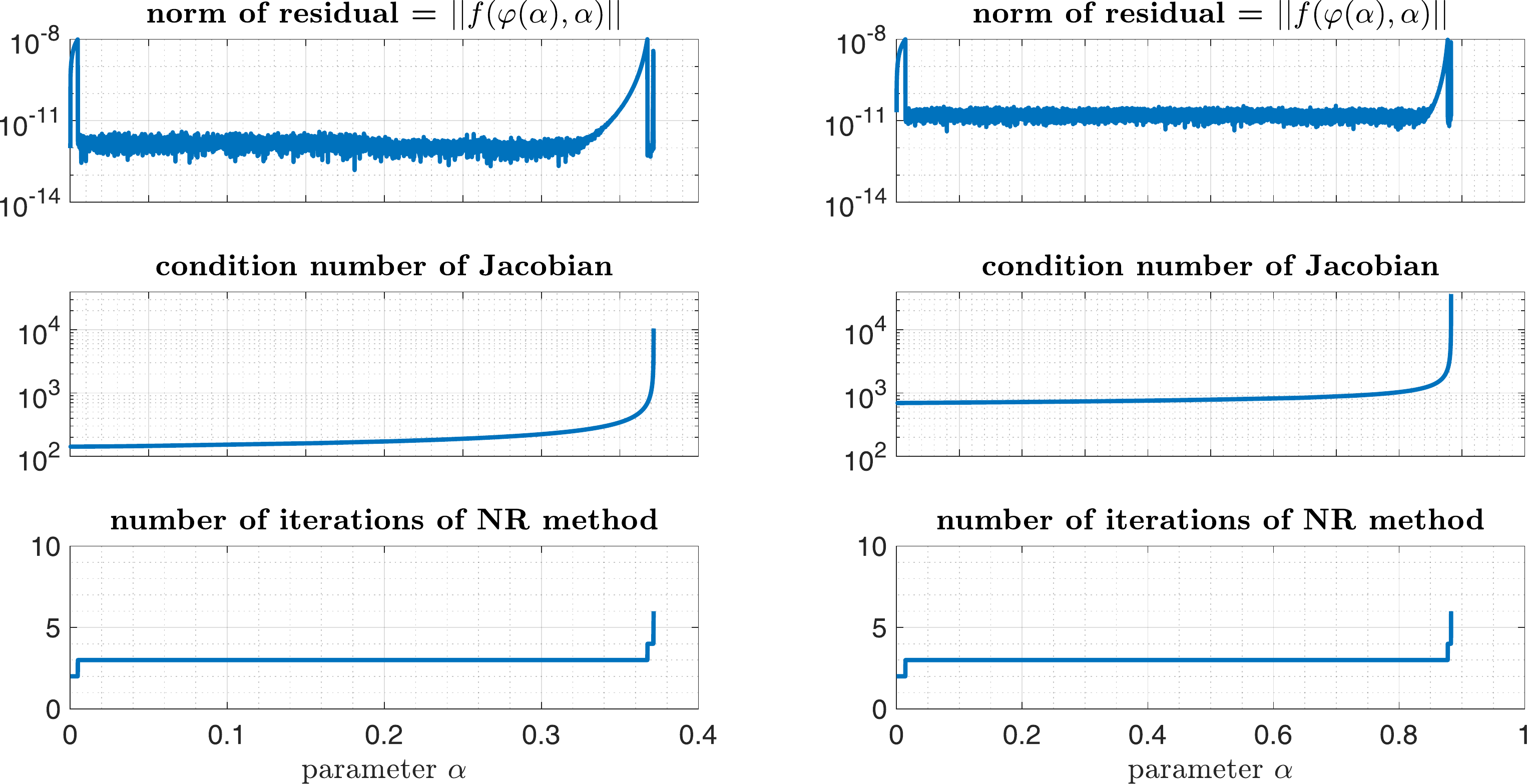}
\caption{Dependency of the condition number of Jacobian on
the proposed scaling parameter for the first toy case (left) and the second toy case (right).\label{fig:condition-test-cases-all}}
\end{figure}

%
%
%

Figure \ref{fig:condition-test-cases-all} empirically confirms the statements of Lemma 1 and Theorem 2. The left and right column of the figure show results for both toy cases introduced above. The top row shows the residuum of the solution which is always smaller than the threshold $\Delta_\text{con}=10^{-8}$. The middle row shows the condition number of the Jacobian $\nabla_{\boldsymbol{\varphi}}f\left(\boldsymbol{\varphi}(\alpha),\alpha\right)$ and the bottom row the number of iterations needed by the Newton-Raphson method. We see that this number is small but it is rising up as the condition number of Jacobian increases. The steep increase of the value of the condition number of Jacobian $\nabla_{\boldsymbol{\varphi}}f\left(\boldsymbol{\varphi}(\alpha),\alpha\right)$ in the middle row is apparent close to the critical value $\alpha_0$. It exactly corresponds to the theoretical conclusion of Theorem 2 stating that the interval of regularity of Jacobian matrix $\nabla_{\boldsymbol{\varphi}}f\left(\boldsymbol{\varphi}(\alpha),\alpha\right)$ is bounded by the critical value $\alpha_0$.  Finally, we can see that the results of Lemma 1 are also numerically confirmed as the solution exists only on some interval.

Let us now compare Matlab implementation of our Algorithm~\ref{alg:Proposed-algorithm} with the standard Matlab built-in function \texttt{fmincon} for solving
constrained non-linear optimization problems using the interior-point method by default.
The optimal scaling parameters $\alpha_0=0.3715$ for the first test case and $\alpha_0=0.8849$ were successfully found both by Algorithm \ref{alg:Proposed-algorithm} and by \texttt{fmincon}. 
The time requirements are compared in Table~\ref{tab:Time-requirements}.
Calculations were performed in Matlab on Intel Core i5 proccesor
and the time requirements are average over 1000 evaluations. Our Algorithm \ref{alg:Proposed-algorithm} gives the solution about one order of magnitude
faster than \texttt{fmincon}. If we provide user-defined
gradient to \texttt{fmincon}, the time requirements are
still approximately six times higher than for Algorithm \ref{alg:Proposed-algorithm}.

\begin{table}
%
%

\begin{tabular}{|l|l|l|l|l|l|l|}
\hline
\multicolumn{1}{ |c|  }{\multirow{3}{*}{Algorithms} } & \multicolumn{3}{c|}{Test case 1} & \multicolumn{3}{c|}{Test case 2}  \tabularnewline
\cline{2-7} 
\multicolumn{1}{ |c|  }{} & \multicolumn{1}{c|}{mean t}  & \multicolumn{1}{c|}{max t} & \multicolumn{1}{c|}{min t} & \multicolumn{1}{c|}{mean t} & \multicolumn{1}{c|}{max t} & \multicolumn{1}{c|}{min t} \\
\multicolumn{1}{ |c|  }{} & \multicolumn{1}{c|}{[s]} & \multicolumn{1}{c|}{[s]} & \multicolumn{1}{c|}{[s]} & \multicolumn{1}{c|}{[s]} & \multicolumn{1}{c|}{[s]} & \multicolumn{1}{c|}{[s]}\\
\hline  
fmincon & 0.0266 & 0.0484 & 0.0221 & 0.0403 & 0.0913 & 0.0355 \tabularnewline
\hline 
fmincon with gradients & 0.0217 & 0.0412 & 0.0177 & 0.0261 & 0.0605 & 0.0219 \tabularnewline
\hline 
proposed Algorithm \ref{alg:Proposed-algorithm} & 0.0032 & 0.0059 & 0.0027 & 0.0042 & 0.0096 & 0.0040 \tabularnewline
\hline 
\end{tabular}

\caption{Comparison of mean, maximal and minimal time requirements of proposed Algorithm \ref{alg:Proposed-algorithm} and Matlab's \texttt{fmincon} for solving the DC PF problem for both test cases. \label{tab:Time-requirements}
}

\end{table}

\subsection{Simulation of a trolleybus network\label{sec:Numerical-results-simulation}}

In this final section, we present numerical simulations of a part of a real network. The simulations are performed in the Eclipse SUMO simulator. We show two test cases, in both we employ our Algorithm \ref{alg:basic:NR method} to solve the DC PF problem.

The first case is a simulation of a single trolleybus on a straight 8 kilometres long route with equally distant (\SI{200}{\metre}) intersections. The overhead wire network is powered by a traction substation at the beginning (position \SI{0}{\metre}) of the route. Figure \ref{fig:Simulation-of-single-vehicle} shows the results. In the top row, the actual value of parameter $\alpha$ is depicted. The value depends on the requested power (depicted in the bottom row), which is periodically oscillating as the trolleybus accelerates after passing each intersection and uses regenerative breaking before reaching the next one. The scaling parameter decreases (when accelerating) with the distance from the power source due to increased length and therefore also resistance of the overhead power line. This driving behaviour with the actual speed of the trolleybus is shown in the middle row. The last row shows the requested power and the received power. The latter is the requested power times the scaling constant. Note that the difference needs to be either covered by an additional source such as an on-board battery (this is the depicted case), or it results in slower acceleration.

\begin{figure}
\begin{centering}
\includegraphics[width=1\textwidth]{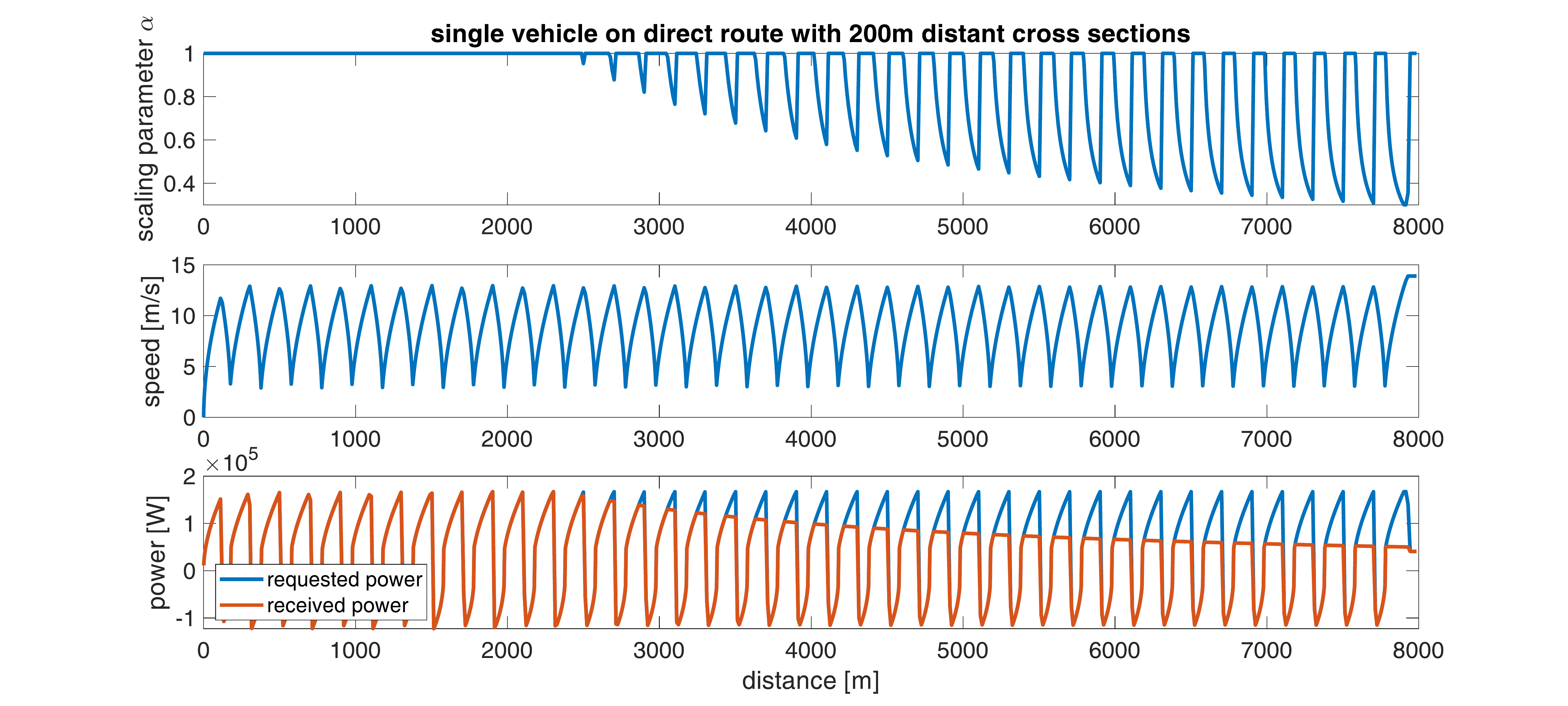}
\par\end{centering}
\caption{Simulation of single trolleybus of 8 kilometers on a straight route with equally distant (200m) cross sections.  \label{fig:Simulation-of-single-vehicle}}
\end{figure}

\begin{figure}

\begin{center}
\includegraphics[width=.49\textwidth]{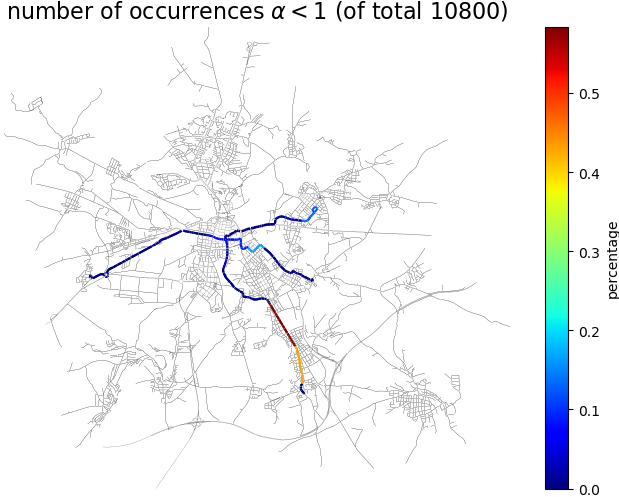}\hspace{.15cm}
\includegraphics[width=.49\textwidth]{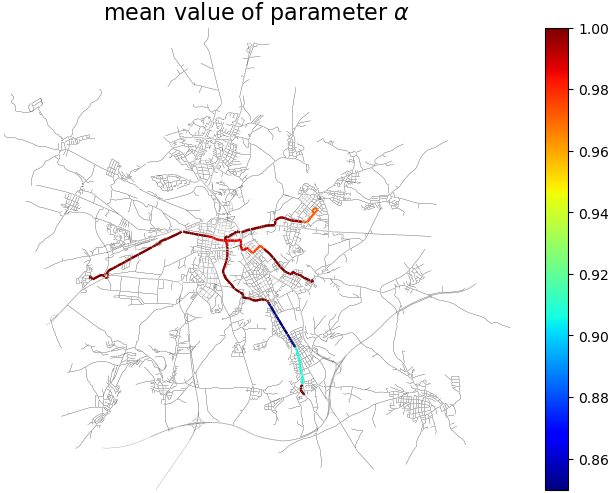}
\end{center}
\caption{Large-scale stress analysis of two trolleybus lines in the city of Pilsen, Czech Republic.\label{fig:pilsen_alpha}}

\end{figure}

The second case is a large-scale simulation of two hybrid-trolleybus lines in the city of Pilsen, Czech Republic. 
%
Figure \ref{fig:pilsen_alpha} shows the frequency of application of the proposed scaling parameter in simulations (it is smaller than one) and its mean value. 
The results indicate sufficient dimensions of overhead wires in the city with an exception in the south side of the city. In this location, mean value of parameter $\alpha$ is under 0.9 and the relative frequency of $\alpha<1$ is above \SI{40}{\percent}. This may indicate problems in the network operation and the necessity to add more sub-station connection points and/or overhead wire clamps into the circuit in order to improve its electrical parameters and decrease the demonstrated energy losses on the power line.

\section{Conclusion}

The formulation of the DC power flow problems leads to a system of non-linear equations that is not always solvable in its given form, despite the fact that
measurements and experiments on its real-world counterpart suggest that the physical system is able to reach a state for which a real solution exists.
To ensure solvability of the DC PF problem, we proposed to introduce a scaling parameter for power demands and presented
a fast algorithm to maximise its value while keeping the DC PF system solvable, together with theoretical and numerical verification.
The introduced strategy enables us to effectively find power demand
thresholds and thus to solve ill-defined DC PF problems.

The algorithm has been  demonstrated on representative toy cases. The performance of Matlab implementation of the proposed method has been compared to Matlab's fmincon; we demonstated that our approach outperforms the standard optimisation approach by a factor of 6 to 9. The viability of the algorithm and its C++ implementation build into the open-source traffic simulator Eclipse SUMO has been demonstrated on a real-world scenario. The C++ implementation is open-sourced and is provided to the community by authors as a part of Eclipse SUMO.

{\small
\bibliographystyle{panm20}
\bibliography{panm20PowerFlowProblemLiterature} 
}


{\small
{\em Authors' addresses}:
{\em Jakub \v{S}ev\v{c}\'ik,  Jan P\v{r}ikryl,  V\'aclav \v{S}m\'idl}, University of West Bohemia, Research and Innovation Centre for Electrical Engineering, Pilsen, Czech Republic; 
 e-mail: \texttt{jsevcik@\allowbreak rice.zcu.cz, prikryl@\allowbreak rice.zcu.cz, vsmidl@\allowbreak rice.zcu.cz }.\\
 {\em Luk\'a\v{s} Adam}, Czech Technical University in Prague, Faculty of Electrical Engineering, Prague, Czech Republic; 
 e-mail: \texttt{lukas.adam.cr@\allowbreak gmail.com }.

}

\end{document}